\documentclass{article}
\usepackage[utf8]{inputenc}
\usepackage{amsmath,amssymb,amsfonts,fullpage}
\usepackage{amsthm}
\usepackage{graphics}
\usepackage{graphicx}
\usepackage{hyperref}
\usepackage{color}
\usepackage{diagrams}

\newcommand{\N}{\mathbb{N}}
\newcommand{\R}{\mathbb{R}}
\newcommand{\Z}{\mathbb{Z}}

\newcommand{\half}{\tfrac{1}{2}}

\newtheorem{theorem}{Theorem}[section]
\newtheorem{proposition}[theorem]{Proposition}
\newtheorem{lemma}[theorem]{Lemma}

\title{An interacting particle system with geometric jump rates near a partially reflecting boundary}
\author{Jeffrey Kuan}

\begin{document}

\maketitle

\abstract{This paper constructs a new interacting particle system on $\mathbb{N}\times\mathbb{Z}_+$ with geometric jumps near the boundary $\{0\}\times\mathbb{Z}_+$ which partially reflects the particles. The projection to each horizontal level is Markov, and on every level the dynamics match stochastic matrices constructed from pure alpha characters of $Sp(\infty)$, while on every other level they match an interacting particle system from Pieri formulas for $Sp(2r)$. Using a previously discovered correlation kernel, asymptotics are shown to be the Discrete Jacobi and Symmetric Pearcey processes.}

\section{Introduction}
To motivate this paper, first review some previous results. In \cite{WW}, the authors construct a continuous--time interacting particle system on $\mathbb{N} \times \mathbb{Z}_+$ using the representation theory of symplectic Lie groups. One distinguishing feature of these dynamics is a wall at $\{0\} \times \mathbb{Z}_+$ which suppresses jumps of particles into the wall. In \cite{D2}, the Pieri formulas from the representation theory of the symplectic Lie groups $Sp(2r)$ are used to construct discrete--time dynamics with geometric jumps, and again jumps into the wall are suppressed.  In \cite{C}, there is a construction of continuous--time dynamics using Plancherel characters of the infinite--dimensional symplectic group $Sp(\infty)$, and again there is a suppressing wall. 

Some previous work had been done with the orthogonal groups as well. In \cite{BK}, Plancherel characters of the infinite--dimensional orthogonal group $O(\infty)$ led to continuous--time dynamics with a  \textit{reflecting} wall, and in \cite{D1}, Pieri rules for the orthogonal groups $O(2r),O(2r+1)$ led to interacting particles with discrete--time geometric jumps, again with a reflecting wall. In \cite{K}, it was shown that the dynamics of \cite{D1} on each level $\N\times\{k\}$ fit into the general framework of \cite{BK} with pure alpha characters of $O(\infty)$. Therefore, it is reasonable to expect that the dynamics of \cite{D2} might also fit into the framework of \cite{C} with pure alpha characters of $Sp(\infty)$. However, it turns out that the dynamics only match on the even levels $\mathbb{N} \times \{2r\}$.

In order to create a physically meaningful interacting particle system which matches that of \cite{C} on every level, we will slightly modify the Pieri formulas of \cite{D1},\cite{D2}. The result is a wall which is \textit{partially} reflecting. Mathematically, this means that a jump to $-x$ is reflected to $x-1$, rather than being totally reflected to $x$ or totally suppressed at $0$. Observe that after the usual scaling limit of discrete--time geometric jumps to continuous--time jumps with exponential waiting times, the particles only jump one step, so the partially reflecting boundary becomes a suppressing boundary.

Note that there may be an algebraic intuition for the discrepancy between the dynamics of \cite{D2} and \cite{C}. The odd symplectic groups $Sp(2r+1)$ of \cite{P} are not simple, in contrast to $Sp(2r),O(2r),O(2r+1)$. In a sense, the odd symplectic groups are less canonical, which may explain why the two dynamics only match at the levels corresponding to $Sp(2r)$. 

The paper is outlined as follows. In section \ref{IPS}, the interacting particle system is defined. In section \ref{PureAlpha}, new formulas for the stochastic matrices from \cite{C} are written. In section  \ref{Projection}, it is shown that the projection to each horizontal level is still Markov, and the resulting transition probabilities are precisely the ones from section \ref{PureAlpha}. Using the explicit expression for the correlation kernel in \cite{C}, section \ref{Asymp} finds the asymptotics for our particle system. 

Note that despite the algebraic motivation and background, the body of the paper is written with minimal reference to representation theory.

\textbf{Acknowledgments}. 
Financial support was available through NSF grant DMS--1502665.

\section{Interacting Particle System}\label{IPS}

First define the state space for the interacting particles. For $k\geq 1$, define
$$
\mathcal{W}_k = \{ (\lambda_1 \geq \ldots \geq \lambda_r) : \lambda_i \in \mathbb{N} \}, \text{ where }  r = \lfloor \tfrac{k+1}{2} \rfloor
$$
If $\lambda=(\lambda_1 \geq \ldots \geq \lambda_r)$ and $\mu=(\mu_1 \geq \ldots \geq \mu_r)$ or $\mu=(\mu_1 \geq \ldots \geq \mu_{r+1})$, say that $\lambda\prec\mu$ if $\mu_{i+1} \leq \lambda_i$ and $\lambda_i \leq \mu_i $ for all possible values of $i$. For $k\leq l$ let
$$
\mathcal{W}_{k,l} := \{ (\lambda^{(k)} \prec \lambda^{(k+1)} \prec \ldots \prec \lambda^{(l)}) : \lambda^{(j)} \in \mathcal{W}_j\}.
$$
The state space for the interacting particles will be
$$
\mathcal{W} = \{ (\lambda^{(1)} \prec \lambda^{(2)} \prec \ldots ) : \lambda^{(j)} \in \mathcal{W}_j \text{ for } 1 \leq j < \infty\}.
$$

If $\xi_1$ and $\xi_2$ are two independent geometric random variables with parameter $q$ (i.e. $\mathbb{P}(\xi_i = x)=(1-q)q^x$ for $x\geq 0$), then for any $x,y\geq 0$
\begin{align*}
\mathbb{P}(x + \xi_1 - \xi_2 = y) &= \sum_{c=0}^{\infty} \mathbb{P}(\xi_2 = c)\mathbb{P}(\xi_1 = y - x + c) \\
&= (1-q)^2 \sum_{c= \mathrm{min}(x-y,0)}^{\infty} q^c q^{y-x+c}\\
&= \frac{1-q}{1+q} q^{| x-y |}
\end{align*}
Let $\{x\}$ denote the modified absolute value
$$
\{x\}=
\begin{cases}
x, x\geq 0 \\
-x-1, x<0
\end{cases}
$$
Thus
$$
\mathbb{P}(\{x + \xi_1 - \xi_2\} = y) = \frac{1-q}{1+q} (q^{|x-y|} + q^{x+y+1})
$$
With this in mind, define
$$
R(x,y) = \frac{1-q}{1+q} (q^{|x-y|} + q^{x+y+1}).
$$
Observe that if $x\geq y\geq z\geq 0$, then
\begin{equation}\label{Req}
R(x,z) = q^{x-y}R(y,z) 
\end{equation}

The particles live on the lattice $\N\times\Z_+$ where $\N$ denotes the non--negative integers and $\Z_+$ denotes the positive integers. The horizontal line $\N\times\{k\}$ is often called the $k$th level. There are always $\lfloor \tfrac{k+1}{2} \rfloor$ particles on the $k$th level, whose positions at time $n$ will be denoted $X^k_1(n)\geq X^k_2(n)\geq X^k_3(n)\geq \ldots\geq X^k_{\lfloor (k+1)/2 \rfloor}(n)\geq 0$. The time can take integer or half--integer values. For convenience of notation, $X^k(n)$ will denote the element $(X^k_1(n), X^k_2(n), X^k_3(n), \ldots, X^k_{\lfloor (k+1)/2 \rfloor}(n))$ $\in\mathbb{N}^{\lfloor (k+1)/2 \rfloor}$. More than one particle may occupy a lattice point. The particles must satisfy the \textit{interlacing property}
\begin{equation*}
X^{k+1}_{i+1}(n)  \leq X^k_i(n) \leq X^{k+1}_i(n)
\end{equation*}
for all meaningful values of $k$ and $i$. This will be denoted $X^k\prec X^{k+1}$. With this notation, the state space can be described as the set of all sequences $(X^1\prec X^2 \prec \ldots)$ where each $X^k\in \mathbb{N}^{\lfloor (k+1)/2 \rfloor}$. The initial condition is $X_i^k(0)=0$, called the \textit{densely packed} initial conditions. Now let us describe the dynamics. 

For $n\geq 0,k\geq 1$ and $1\leq i \leq \lfloor \tfrac{k+1}{2} \rfloor$, define random variables
\[
\xi^k_i(n+1/2), \ \ \xi^k_i(n)
\]
which are independent identically distributed geometric random variables with parameter $q$. In other words, $\mathbb{P}(\xi^1_1(1/2)=x)=q^x(1-q)$ for $x\in\N$. 

At time $n$, all the particles except $X^k_{({k+1})/{2}}(n)$ try to jump to the left one after another in such a way that the interlacing property is preserved. The particles $X^k_{({k+1})/{2}}(n)$ do not jump on their own. The precise definition is 
\begin{eqnarray*}
X^k_{(k+1)/{2}}(n+\tfrac{1}{2})&=&\min(X^k_{(k+1)/{2}}(n), X_{(k-1)/2}^{k-1}(n+\tfrac{1}{2}))\ \ k \text{ odd}\\
X^k_i(n+\tfrac{1}{2})&=& \max(X^{k-1}_i(n), \min(X^k_i(n),X^{k-1}_{i-1}(n+\tfrac{1}{2})) - \xi^k_i(n+\tfrac{1}{2})),
\end{eqnarray*}
where $X^{k-1}_0(n+\tfrac{1}{2})$ is formally set to $+\infty$.

At time $n+\tfrac{1}{2}$, all the particles except $X^k_{({k+1})/{2}}(n+\half)$ try to jump to the right one after another in such a way that the interlacing property is preserved. The particles $X^k_{({k+1})/{2}}(n+\half)$ jump according to the law $R$. The precise definition is 
$$
X^k_{(k+1)/{2}}(n+1)=\min(\{ X^k_{(k+1)/{2}}(n) + \xi^k_{(k+1)/2}(n+1) - \xi^k_{(k+1)/2}(n+\half) \}, X^{k-1}_{(k-1)/{2}}(n) )
$$
when $k$ is odd and
$$
X^k_i(n+1)= \min(X^{k-1}_{i-1}(n+\half), \max(X^k_i(n+\half),X^{k-1}_{i}(n+1)) + \xi^k_i(n+1)),
$$
where $X^{k-1}_0(n+1)$ is formally set to $+\infty$.

Let us explain the particle system. The particles preserve the interlacing property in two ways: by pushing particles above it, and being blocked by particles below it. So, for example, in the left jumps, the expression $\min(X^k_i(n),X^{k-1}_{i-1}(n+\tfrac{1}{2}))$ represents the location of the particle after it has been pushed by a particle below and to the right. Then the particle attempts to jump to the left, so the term $\xi^k_i(n+\tfrac{1}{2})$ is subtracted. However, the particle may be blocked a particle below and to the left, so we must take the maximum with $X^{k-1}_i(n)$.

While $X_i^k(n)$ is not simple, applying the shift $\tilde{X}_i^k(n)=X_i^k(n) + \lfloor\tfrac{k+1}{2}\rfloor -i$ yields a simple process. In other words, $\tilde{X}$ can only have one particle at each location.

Figure \ref{Jumping} shows an example of $\tilde{X}$. 

\section{Relation to pure alpha characters of $Sp(\infty)$}\label{PureAlpha}
For $k\geq 1$, define $r_k = \lfloor \frac{k+1}{2}\rfloor$ and $\tilde{\lambda}_i = \lambda_i + r_k - i$ and 
$$
a_k:=
\begin{cases}
-1/2, & \text{if } k \text{ odd}\\
1/2, & \text{if } k \text{ even}
\end{cases}
$$ 
Note that $2r_k+a_k-1/2=k$. Set
$$
\langle f,g\rangle_{a} = \frac{2^{a+1/2}}{\pi}\int_{-1}^1 f(x)g(x) (1-x)^{a}(1+x)^{1/2}dx
$$

There are explicit functions $s_k$: for $k=2r$ or $k=2r+1$,
$$
s_{2r}(\lambda) = \prod_{1\leq i<j\leq r} \frac{l_i^2-l_j^2}{m_i^2-m_j^2} \cdot \prod_{1\leq i\leq r} \frac{l_i}{m_i}, \quad s_{2r-1}(\lambda) = \prod_{1\leq i<j\leq r} \frac{l_i'^2-l_j'^2}{m_i'^2-m_j'^2} 
$$
where $l_i = \lambda_i + (r_k-i)+1, m_i = (r_k-i) +1,$ and $l_i' = l_i - 1/2, m_i' = m_i-1/2$ for $1\leq i\leq r$. These satisfy
\begin{equation}\label{Central}
\sum_{ \substack{ \lambda \in \mathcal{W}_{k-1} \\ \lambda \prec \mu}} s_{k-1}(\lambda) = s_k(\mu).
\end{equation}
Note that $s_1(\lambda)=1$ for all $\lambda$ and $s_2(\lambda) = \lambda_1+1$.

The following transition probabilities on $\mathcal{W}_n$ are from section 5.1 of  \cite{C} are
$$
T_k^{\phi}(\lambda,\mu):= \det\left[ \left \langle J_{\tilde{\lambda}_i,a_k}, J_{\tilde{\mu}_j,a_k}\phi \right\rangle \right]_{i,j=1}^{r_k} \frac{  s_k(\mu) }{ s_k(\lambda)}. 
$$
where $J$ are the Jacobi polynomials satisfying 
$$
J_{k,1/2}\left( \frac{z+z^{-1}}{2}\right) = \frac{z^{k+1}-z^{-(k+1)}}{z-z^{-1}}, \quad J_{k,-1/2}\left( \frac{z+z^{-1}}{2}\right) = \frac{z^{k+1/2}+z^{-(k+1/2)}}{z^{1/2}+z^{-1/2}}.
$$
When $\phi(x) = e^{t(x-1)}$, these are the transition probabilities arising from Plancherel characters of $Sp(\infty)$. In this paper a different $\phi(x)$ depending on a parameter $\alpha \geq 0$ will be considered. Note that for general $\phi(x)$, a priori there is not an obvious physical description of the dynamics. 

The following formula is standard (for example, see Lemma 3.3 from \cite{K}). Let  $c=(c_1\geq c_2\geq \ldots \geq c_r)$ and $\lambda=(\lambda_1\geq \lambda_2 \geq \ldots \geq \lambda_r)$. Set  
\[
\psi(s,l)=
\begin{cases}
1, \ \ \text{if}\ l\geq s\\
0, \ \ \text{if}\ l<s.
\end{cases}
\]
Then 
\begin{equation}\label{Interlacing}
\det[\psi(c_i-i+r,\lambda_j-j+r)]_1^r=
\begin{cases}
1, \ \ \text{if}\ c\prec\lambda,\\
0, \ \ \text{if}\ c\not\prec\lambda.
\end{cases}
\end{equation}

Set
\begin{align*}
P_{2r+1}(\lambda,\beta) &= \sum_{c \in \mathbb{N}^r, c\prec \lambda,\beta} (1-q)^{2r} \frac{s_{2r+1}(\beta)}{s_{2r+1}(\lambda)} q^{\sum_{i=1}^r \lambda_i + \beta_i -2c_i} R(\lambda_{r+1},\beta_{r+1})\\
P_{2r}(\lambda,\beta) &= \sum_{c \in \mathbb{N}^r, c\prec \lambda,\beta} (1-q)^{2r} \frac{s_{2r}(\beta)}{s_{2r}(\lambda)} q^{\sum_{i=1}^r \lambda_i + \beta_i -2c_i} 
\end{align*}
Note that the formula for $P_{2r}$ is essentially identical to (2) from \cite{D2}, and the formula for $P_{2r+1}$ is also similar to a related formula from \cite{D1} with a different definition of $R$ (see Proposition 5.2 and Theorem 7.1). The discussions in \cite{D1},\cite{D2} describe how to obtain these formulas from Pieri's rule.

This next proposition shows that $P$ and $T$ are the same. The proof is similar to the Proposition 3.1 from \cite{K}. The only difference is that $R(\cdot,\cdot)$ has a different definition here, but it turns out that the only relevant information about $R$ for the proof is that \eqref{Req} is true. The full proof is still included here for completeness, because much of the notation is different.

\begin{proposition}\label{Same}
Let
$$
\phi(x) = \frac{1}{1+\alpha(1-x)+\frac{\alpha^2}{2}(1-x)}, \quad \alpha=\frac{2q}{1-q}.
$$
Then
$$
P_{2r+1}=T_{2r+1}^{\phi}, \quad P_{2r}=T_{2r}^{\phi} 
$$
\end{proposition}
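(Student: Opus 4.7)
The plan is to follow the template of Proposition 3.1 of \cite{K}, with the only substantive change being the handling of the modified boundary term in $R$. First I would use the interlacing identity \eqref{Interlacing} to replace each of the indicators $\mathbf{1}[c \prec \lambda]$ and $\mathbf{1}[c \prec \beta]$ appearing in the definitions of $P_{2r}(\lambda,\beta)$ and $P_{2r+1}(\lambda,\beta)$ by an $r \times r$ determinant of $\psi$-entries. This converts the constrained sum over $c$ to an unconstrained sum over $c \in \N^r$ involving the product of two such determinants and the geometric weight $q^{-2\sum_i c_i}$, after pulling out the common prefactor $q^{\sum_i \lambda_i + \beta_i}$.

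The next step is to distribute the factor $q^{-2c_i}$ into one of the determinants row by row, absorb the row-dependent shift $c_i - i + r$ into a new variable, and apply Andreief's identity to collapse the product of two determinants and the sum over $c$ into a single $r \times r$ determinant whose $(i,j)$ entry is a one-dimensional truncated geometric series in $c \in \N$. Carrying out this elementary sum explicitly produces matrix entries of the form $q^{|\tilde{\lambda}_i - \tilde{\mu}_j|}$ plus a boundary correction $q^{\tilde{\lambda}_i + \tilde{\mu}_j + 1}$ coming from the restriction $c \geq 0$; this is exactly the structure of $R$.

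The third step is to identify these matrix entries with the Jacobi inner products $\langle J_{\tilde{\lambda}_i, a_k}, J_{\tilde{\mu}_j, a_k} \phi\rangle_{a_k}$ appearing in $T^{\phi}_k$. I would substitute $x = (z + z^{-1})/2$, use the explicit closed-form expressions for $J_{n, \pm 1/2}$ given in the excerpt to convert the weighted integral to a contour integral in $z$, then evaluate by residues. The particular form of $\phi$ and the relation $\alpha = 2q/(1-q)$ are precisely what is required for this residue computation to reproduce both the bulk term $q^{|\tilde{\lambda}_i - \tilde{\mu}_j|}$ and the boundary term $q^{\tilde{\lambda}_i + \tilde{\mu}_j + 1}$ produced in the second step. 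This matching is the crux of the argument and is the step I expect to demand the most technical care.

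Finally, for the odd case $k = 2r+1$ there is an extra factor $R(\lambda_{r+1}, \beta_{r+1})$ outside the $c$-sum, and the target determinant in $T^\phi_{2r+1}$ is $(r+1) \times (r+1)$ rather than $r \times r$. By the interlacing $\lambda_{r+1} \leq c_r$ and $\beta_{r+1} \leq c_r$, one can use \eqref{Req} iteratively to rewrite $R(\lambda_{r+1}, \beta_{r+1})$ as a product of $q$-powers times entries that fill in an additional row (and column) of the determinant, producing precisely the missing $(r+1)$st row of Jacobi inner products. As the excerpt explicitly notes, only \eqref{Req}, not the specific formula for $R$, is used here, which is why the identical strategy of \cite{K} carries through with the modified $R$ of this paper.
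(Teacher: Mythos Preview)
Your plan is essentially the paper's own proof: Andr\'eief (Lemma~2.1 of \cite{BK}) to collapse the $c$-sum, contour-integral evaluation of the Jacobi inner products after $x=(z+z^{-1})/2$, and then the passage from an $r\times r$ to an $(r+1)\times(r+1)$ determinant in the odd case using \eqref{Req}. Two points of precision, however, are worth correcting before you execute it.

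First, in your step~2 the boundary correction for the even case $k=2r$ comes out with the opposite sign and a shifted exponent: summing $(1-q)^2\sum_{c=0}^{\min(\lambda,\beta)} q^{\lambda+\beta-2c}$ gives $\tfrac{1-q}{1+q}(q^{|\lambda-\beta|}-q^{\lambda+\beta+2})$, not $+q^{\lambda+\beta+1}$. This matches the $a=1/2$ Jacobi inner product, not $R$; the function $R$ is the $a=-1/2$ inner product. So the even and odd Jacobi parameters genuinely produce different entry formulas, and your identification ``this is exactly the structure of $R$'' conflates them.

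Second, in the odd case the manoeuvre is more than ``filling in'' an extra row and column. Because the interlacing forces $c_r\ge\max(\lambda_{r+1},\beta_{r+1})$, the Andr\'eief sum acquires a shifted lower limit, and the resulting $r\times r$ entries are $(q^{x_i+y_j-2\max(\lambda_{r+1},\beta_{r+1})+2}-q^{|x_i-y_j|})/(q^2-1)$, which are \emph{not} yet of the form $R(x_i,y_j)$. The paper first borders this matrix with a row and column built from $R(\lambda_{r+1},\cdot)$ and $R(\cdot,\beta_{r+1})$, and then performs row (or column) operations---adding a multiple of the new $(r+1)$st row/column to each of the first $r$---to convert every bulk entry to $(1-q)^{-2}R(x_i,y_j)$. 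It is in these operations that \eqref{Req} is used, and one must separately handle $\lambda_{r+1}>\beta_{r+1}$ versus $\lambda_{r+1}\le\beta_{r+1}$, as well as verify vanishing of both sides when $\max(\lambda_{r+1},\beta_{r+1})>\min(\lambda_r,\beta_r)$. Your sketch gestures at this but underestimates what has to be checked.
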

\begin{proof}
We first prove this for $r=1$. Substituting $x=(z+z^{-1})/2$
$$
\phi(x) = \frac{(1-q)^2}{(z-q)(1-qz)}, \quad \quad (1-x)^{a}(1+x)^{1/2}dx \mapsto
\begin{cases}
\frac{(z^{1/2}+z^{-1/2})^2}{4iz}, & a=-1/2\\
-\frac{(z-z^{-1})^2}{8iz}, & a=1/2
\end{cases}
$$
and the integral over $[-1,1]$ becomes an integral over the unit circle, with an extra factor of $1/2$ occurring because the map $z\mapsto x$ is two--to--one.
Thus, the term inside the $1\times 1$ determinant in $T^{\phi}_n$ can be calculated from the identities
\begin{align*}
\frac{1}{4\pi i}\oint (z^{k+1/2}+z^{-(k+1/2)}) (z^{l+1/2}+z^{-(l+1/2)}) \frac{(1-q)^2 dz}{(z-q)(1-qz)} &= \frac{1-q}{1+q} (q^{k+l+1} + q^{| k-l|} )\\
-\frac{1}{4\pi i}\oint (z^{k+1}-z^{-k-1}) (z^{l+1}-z^{-l-1}) \frac{(1-q)^2dz}{(z-q)(1-qz)} &= \frac{1-q}{1+q} (q^{| k-l|}-q^{k+l+2}  ) 
\end{align*}
for $n=1,2$ respectively. The first line is $R(k,l)=P_1(k,l)$. For the second line, note that
\begin{equation}\label{Above}
(1-q)^2\sum_{c=0}^{\text{min}(\lambda,\beta)} q^{\lambda+\beta-2c} = (1-q)^2\frac{q^{\lambda+\beta}-q^{\vert \lambda-\beta\vert -2}}{1-q^{-2}} = \frac{1-q}{1+q}(q^{\vert \lambda-\beta\vert }-q^{\lambda+\beta+2})
\end{equation}
which shows that $P_2 = T_2^{\phi}$.

Now proceed to higher values of $r$. By  \eqref{Interlacing},
$$
P_{2r}(\lambda,\beta) = (1-q)^{2r} \frac{s_{2r}(\beta)}{s_{2r}(\lambda)} \sum_{s_1>\ldots>s_
r\geq 0} \det[f_{s_i}(\lambda_j-j+r)] \det[f_{s_i}(\beta_j-j+r)]
$$
where
$$
f_{s}(l)= q^{l-s}\psi(s,l).
$$
By Lemma 2.1 of \cite{BK}, this equals
$$
(1-q)^{2r} \frac{s_{2r}(\beta)}{s_{2r}(\lambda)} \det\left[\sum_{s=0}^{\infty}f_{s}(\lambda_i-i+r)f_{s}(\beta_j-j+r)\right].
$$
By identity \eqref{Above}, $P_{2r}=T_{2r}^{\phi}$.

For $P_{2r+1}$, start with the following claim: if $\mathrm{max}(\lambda_{r+1},\beta_{r+1}) > \mathrm{min}(\lambda_{r},\beta_{r})$ then $P_{2r+1}(\lambda,\beta) =T_{2r+1}^{\psi}(\lambda,\beta)=0$. To see this claim, first notice that $P_{2r+1}=0$ follows immediately from the description of the interacting particle system, or from the fact that $\{c \in \mathbb{N}^{r}: c\prec\lambda,\beta\}$ is empty. By \eqref{Req}, in the matrix of $T_{2r+1}$ the $r$th column is a multiple of the $(r+1)$th column, so that $T_{2r+1}=0$.

Because of this claim, assume that $\mathrm{max}(\lambda_r,\beta_r) \leq \mathrm{min}(\lambda_{r-1},\beta_{r-1})$.

Lemma 2.1 from \cite{BK} is not immediately applicable, because we are summing over elements of $\mathbb{N}^r$ while the determinants are of size $r+1$. Notice, however, that $c\prec\lambda,\beta$ if and only if $c\prec\lambda_{\text{red}},\beta_{\text{red}}$ (where $\lambda_{\text{red}},\beta_{\text{red}}$ denote $(\lambda_1,\ldots,\lambda_{r}),(\beta_1,\ldots,\beta_{r})$) and $c_{r}\geq\max(\lambda_{r+1},\beta_{r+1})$. Thus
\begin{multline*}
P_{2r+1}(\lambda,\beta) = \sum_{c \in \mathbb{N}^r, c\prec \lambda,\beta} (1-q)^{2r} \frac{s_{2r}(\beta)}{s_{2r}(\lambda)} q^{\sum_{i=1}^r \lambda_i + \beta_i -2c_i} R(\lambda_{r+1},\beta_{r+1})\\
=(1-q)^{2r}\frac{s_{2r}(\beta)}{s_{2r}(\lambda)} R(\lambda_{r+1},\beta_{r+1})   \\
\times\sum_{s_1>s_2>\ldots> s_{r}\geq\max(\lambda_{r+1},\beta_{r+1})} \det[f_{s_i,1}(\lambda_j-j+r)]_1^{r}\det[f_{s_i,1}(\beta_j-j+r)]_1^{r}\\
=(1-q)^{2r}R(\lambda_{r+1},\beta_{r+1})  \frac{s_{2r}(\beta)}{s_{2r}(\lambda)}\det\left[\sum_{s=\max(\lambda_{r+1},\beta_{r+1})}^{\infty}f_{s,1}(\lambda_i-i+r)f_{s,1}(\beta_j-j+r)\right]_1^{r}.
\end{multline*}
A straightforward calculation shows that if $\max(\lambda_{r+1},\beta_{r+1})\leq\min(x,y)$, then
\[
\sum_{s=\max(\lambda_{r+1},\beta_{r+1})}^{\infty}f_{k,1}(x)f_{k,1}(y)=\frac{q^{x+y-2\max(\lambda_{r+1},\beta_{r+1})+2}-q^{\vert x-y\vert}}{q^2-1}  .
\]

Thus it remains to show that
\begin{multline*}
 \det\left[ R(\lambda_i-i+r+1,\beta_j-j+r+1) \right]_{i,j=1}^{r+1} \\
 =(1-q)^{2r}R(\lambda_{r+1},\beta_{r+1}) \det\left[\frac{q^{x_i+y_j-2\max(\lambda_{r+1},\beta_{r+1})+2}-q^{\vert x_i-y_j\vert}}{q^2-1}   \right]_1^r
\end{multline*}
where $x_i = \lambda_i-i+r$ and $y_j = \beta_j-j+r$. Recall that
$$
R(x,y) = \frac{1-q}{1+q} (q^{|x-y|} + q^{x+y+1}).
$$

To show that this is true, perform a sequence of operations to the smaller matrix. These operations are slightly different for $\lambda_{r+1}>\beta_{r+1}$ and $\lambda_{r+1}\leq\beta_{r+1}$. Consider $\lambda_r>\beta_r$ for now.

First, add a row and a column to the matrix of size $r$. The $(r+1)$th column is $[0,0,0,\ldots,0,R(\lambda_{r+1},\beta_{r+1})]$ and the $(r+1)$th row is $[R(\lambda_{r+1},\beta_1+r),R(\lambda_{r+1},\beta_2-1+r),\ldots,R(\lambda_{r+1},\beta_r+1),R(\lambda_{r+1},\beta_{r+1})]$.
This multiplies the determinant by $R(\lambda_{r+1},\beta_{r+1})$.

Second, for $1\leq i\leq r$, perform row operations by replacing the $i$th row with
\[
i\text{th row} + \frac{1}{(q-1)^2}\frac{R(\lambda_i-i+r+1,\beta_{r+1})}{R(\lambda_{r+1},\beta_{r+1})}((r+1)\text{th row}).
\]
For $1\leq j\leq r$ and letting $(x,y)=(\lambda_i-i+r+1,\beta_j-j+r+1)$, the $(i,j)$ entry is (recall \eqref{Req})
\begin{eqnarray*}
&&\frac{q^{x+y-2\max(\lambda_{r+1},\beta_{r+1})}-q^{\vert x-y\vert}}{q^2-1}+ \frac{1}{(q-1)^2}\frac{R(x,\beta_{r+1})}{R(\lambda_{r+1},\beta_{r+1})}R(\lambda_{r+1},y)\\
&=&\frac{q^{x+y-2\max(\lambda_{r+1},\beta_{r+1})}-q^{\vert x-y\vert}}{q^2-1}-\dfrac{q^{x-\lambda_{r+1}}}{q^2-1}( q^{\lambda_{r+1}+y+1} + q^{\vert \lambda_{r+1}-y\vert})\\
&=&\frac{-q^{x+y+1}-q^{\vert x-y\vert}}{q^2-1}=(1-q)^{-2}R(x,y).
\end{eqnarray*}
Here, we used the fact that $y\geq\beta_{r}\geq\min(\lambda_{r},\beta_{r})\geq\lambda_{r+1}$ and $y\geq\lambda_{r+1}>\beta_{r+1}\geq 0$. For $j=r+1$, $\lambda_{r+1}>y=\beta_{r+1}$, so the $(i,j)$ entry is
$$
0+ \frac{1}{(q-1)^2}\frac{R(\lambda_i-i+r+1,\beta_{r+1})}{R(\lambda_{r+1},\beta_{r+1})}R(\lambda_{r+1},y)  =(1-q)^{-2}R(x,y).
$$
Thus, the larger determinant is $(1-q)^{2r}R(\lambda_r,\beta_r)$ times the larger determinant.

Now consider $\lambda_{r+1}\leq\beta_{r+1}$. First, add the $(r+1)$th row, which is equal to $[0,0,\ldots,0,R(\lambda_{r+1},\beta_{r+1})]$, and add the $(r+1)$th column which is $[R(\lambda_1-1+r,\beta_{r+1}),R(\lambda_2-2+r,\beta_{r+1}),\ldots,R(\lambda_{r+1},\beta_{r+1})]$. This multiplies the determinant by $R(\lambda_{r+1},\beta_{r+1})$. 

Second, for $1\leq j\leq r$, perform column operations by replacing the $j$th column with
\[
j\text{th column} + \frac{1}{(q-1)^2}\frac{R(\lambda_{r+1},\beta_j-j+r+1)}{R(\lambda_{r+1},\beta_{r+1})}((r+1)\text{th column}).
\]
Once again, this yields a matrix whose entries are $(1-q)^{-2}R(\lambda_i-i+r+1,\beta_j-j+r+1)$, except for the last column, which is $R(\lambda_i-i+r+1,\beta_{r+1})$. 
\end{proof}

Note that while the projections to each level are the same, the multi--level dynamics are different. For the dynamics in this paper, there is zero probability of a jump from $(0) \prec (1) \prec (1,0)$ to $(0) \prec (0) \prec (0,0)$ on the bottom three levels, because $X_1^2$ prevents $X_1^3$ from jumping to $0$. However, in the dynamics of \cite{C}, this probability is nonzero because all of the terms in (60) are nonzero.

\section{Projections to levels}\label{Projection}
This section will provide a proof that the projection to each level is Markov with an explicit expression for the Markov operator. Note that the method of the proof is very similar to that of \cite{D1,D2}. The primary difference is that due to the different expression for $R$ and for the branching rule, the identities \eqref{keyidentity1} and \eqref{keyidentity4} are changed.

We consider the subset $\mathcal{W}^{(2)}_{k}$  of $\mathcal{W}_{k}\times \mathcal{W}_k$ defined by
$$
\mathcal{W}^{(2)}_{k}=\{(z,y): z\prec y\}\subseteq \mathcal{W}_{k}\times \mathcal{W}_k,
$$ 
and  define  a Markov kernel $S_k$ on $\mathcal{W}^{(2)}_k$ by 
\begin{align}\label{Skeven}
S_{k}((z,y),(z',y'))&=(1-q)^{k}\frac{s_{k}(y')}{s_{k}(y)}q^{\sum_{i=1}^{r}(y_i+y'_i-2z'_i)}1_{ z'\prec y,y'}
\end{align} 
when $k=2r$, and  
\begin{align}\label{Skodd}
S_{k}((z,y),(z',y'))&=(1-q)^{k-1}\frac{s_{k}(y')}{s_{k}(y)}R(y_r,y'_r)q^{\sum_{i=1}^{r-1}(y_i+y'_i-2z'_i)} 1_{z'_r=y_r}1_{ z'\prec y,y'} 
\end{align}
when $k=2r-1$. 
Since the expression for $S_k$ does not depend on $z$, also write it as $S_k(y,(z',y'))$. Note that
\begin{equation}\label{SP}
\sum_{z' \in \mathcal{W}^{(2)}_k} S_k((z,y),(z',y')) = P_k(y,y')
\end{equation}
Thus Proposition \ref{Same} implies that $S_k$ is a Markov operator.

\begin{theorem}\label{MarkovProj}
For each $k\geq 1$, the random process $(X^k(n-\half),X^k(n))_{n \in \mathbb{N}}$ is a Markov process with transition kernel given by $S_k$. Furthermore, $(X^k(n))_{n\in \mathbb{N}}$ is a Markov process with transition kernel given by $P_k$.
\end{theorem}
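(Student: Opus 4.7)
The second assertion follows from the first by marginalizing out the half--integer time, since by \eqref{SP} the quantity $\sum_{z'}S_k((z,y),(z',y'))=P_k(y,y')$ does not depend on $z$; so it suffices to prove that the pair process is Markov with kernel $S_k$. I would do this by induction on $k$. The base case $k=1$ is immediate: the convention $X^0_0\equiv+\infty$ forces $X^1_1(n+\tfrac12)=X^1_1(n)$, and then the full--step rule makes $X^1_1(n+1)$ equal to $\{X^1_1(n)+\xi^1_1(n+1)-\xi^1_1(n+\tfrac12)\}$, whose law is exactly $R(X^1_1(n),\cdot)$ by the derivation preceding the definition of $R$. Thus $S_1$ as written matches the one--step law of the pair.

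For the inductive step I would follow the intertwining--with--a--Gibbs--link strategy of \cite{D1,D2,K}. Assuming the result for all levels $\le k-1$, write the one--step conditional law of $(X^k(n+\tfrac12),X^k(n+1))$ as a product of local rules, conditioning on the trajectory $(X^{k-1}(n),X^{k-1}(n+\tfrac12),X^{k-1}(n+1))$ of the level immediately below: each ordinary particle contributes a geometric step constrained by pushes from below--right and blocks from below--left, while on odd levels the wall particle $X^k_{(k+1)/2}$ contributes an $R$--step capped by $X^{k-1}_{(k-1)/2}(n)$. Next, sum this expression against the inductive kernel $S_{k-1}$ describing the joint law of $(X^{k-1}(n+\tfrac12),X^{k-1}(n+1))$ given $(X^{k-1}(n-\tfrac12),X^{k-1}(n))$; after exchanging the order of summation, the collection of interlacing indicators collapses to the single indicator $1_{z'\prec y,y'}$ appearing in $S_k$ and the geometric weights combine into the monomial $q^{\sum_i(y_i+y'_i-2z'_i)}$. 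The ratio $s_k(y')/s_k(y)$ is supplied by the branching rule \eqref{Central} applied iteratively between levels $k-1$ and $k$.

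The main obstacle is the verification of the two algebraic identities labeled \eqref{keyidentity1} and \eqref{keyidentity4}, flagged by the author as the only difference from \cite{D1,D2}. They are the analogs, in the partially reflecting setting, of the identities used there, but the cross--term $q^{x+y+1}$ in $R$ forces a genuinely different calculation; the key input is the semigroup--type relation \eqref{Req}, which allows one to factor $R(\lambda_{r+1},\beta_{r+1})$ out of expressions involving the wall particle and to recombine it with the remaining determinantal factor, exactly as in the last step of the proof of Proposition \ref{Same}. Once these identities are in hand, the induction step reduces to the same bookkeeping over interlacing chains and Jacobi--Trudi--style determinants as in \cite{D1,D2,K}, and both assertions of the theorem follow.
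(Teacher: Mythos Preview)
Your high--level architecture is exactly the paper's: reduce the second claim to the first via \eqref{SP}; induct on $k$; and at the inductive step establish the Pitman--Rogers intertwining $L_kQ_k=S_kL_k$, where $L_k((z,y),(u,z,y))=1_{u\prec y}\,s_{k-1}(u)/s_k(y)$ is the Gibbs link and $Q_k$ is the explicit two--level kernel (written out in \eqref{Qlodd}, \eqref{Qleven}). The paper evaluates $L_kQ_k$ by summing over the lower--level position $u$ first and the half--step position $v$ second, invoking the four identities of Lemma~\ref{keyidentity}.

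Where your sketch goes astray is the mechanism you propose for \eqref{keyidentity1} and \eqref{keyidentity4}. Neither is proved via \eqref{Req}, and no determinant appears anywhere in the intertwining verification: both identities are established by brute--force case analysis after writing closed formulas for $\overset{a\leftarrow}{P}$, $\overset{\rightarrow b}{P}$, and $\overset{\rightarrow b}{R}$ (splitting on the sign of $x-z$ for \eqref{keyidentity1}, and on the relative order of $y,y',a$ for \eqref{keyidentity4}). You appear to be importing the machinery from the proof of Proposition~\ref{Same} --- which genuinely uses \eqref{Req} and row/column operations on determinants --- into a computation where it plays no role. Separately, the ratio $s_k(y')/s_k(y)$ is not ``supplied by \eqref{Central}'': it simply cancels, because $L_k$ carries $s_{k-1}(u)/s_k(y)$ and $S_{k-1}$ carries $s_{k-1}(x)/s_{k-1}(u)$, so both $L_kQ_k$ and $S_kL_k$ end up with the common factor $s_{k-1}(x)/s_k(y)$. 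The branching identity \eqref{Central} is used only to check that $L_k$ is stochastic.
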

\begin{proof}
It suffices to prove the first statement, because by \eqref{SP} the second follows from the first.

The proof will follow from induction on $k$. For $k=1$, Theorem \ref{MarkovProj} is clearly true. If the random process $(X^{k-1}(n-\half),X^{k-1}(n))_{n \in \mathbb{N}}$ is Markov with transition kernel $S_{k-1}$, then the random process $(X^{k-1}(n), X^k(n - \half), X^k(n))_{n \in \mathbb{N}}$ is also Markov with some transition kernel $Q_k$, since the evolution of the $k$th level only depends on the evolution of the $(k-1)$th level. Let $L_k$ be a Markov projection from the $(k-1)$th and $k$th level onto just the $k$th level. If we show that 
\begin{equation}\label{PRInt}
L_kQ_k = S_kL_k
\end{equation}
then the intertwining property of \cite{PR}, Theorem 2, will imply that the projection to the $k$th level is Markov with kernel $S_k$.

The explicit expression for $L_k$ is not hard to write. Define  $L_{k}$  from $ \mathcal{W}^{(2)}_k$ to $ \mathcal{W}_{k-1}\times  \mathcal{W}^{(2)}_k$ by 
\begin{align*}
L_k((z_0,y_0),(x,y,z)) &= 1_{(z_0,y_0)=(z,y)}\frac{s_{k-1}(x)}{s_k(y)}1_{x\prec y}
\end{align*}
By \eqref{Central}, the $L_k$ are Markov.

In order to prove \eqref{PRInt}, there also need to be explicit formulas for $Q_k$. In order to write these formulas, first introduce some notation. Let $\xi_1$ and $\xi_2$ be two independent geometric random variables with parameter $q$. For  $x\geq a \geq 0$, let $\overset{a\leftarrow}{P}(x,.)$ denote the law of the random variable
$$\max(a,x-\xi_1).$$
For $b\geq x\geq 0$, let $\overset{\rightarrow b}{P}(x,.)$ and $\overset{\rightarrow b}{R}(x,.)$ respectively denote the laws of the random variables
$$
\min(b,x+\xi_1)\,\textrm{ and } \,\min(b,\vert x+\xi_1-\xi_2\vert),
$$
For $x,y\in \R^2$ such that $x\leq y$ we let 
$$P(x,y)=(1-q)q^{y-x}.$$

With this notation in place, the description of the model implies the following explicit expression for $Q_k$.  For $(u,z,y),(x,z',y')\in  \mathcal{W}_{k-1} \times \mathcal{W}^{(2)}_k$ such that $u\prec y$ and $x\prec y'$  
\begin{align}\label{Qlodd} 
Q_{k}((u,z,y),(x,z',y'))=\sum_{v\in \N^{r-1}}S_{k-1}&(u,(v,x))\overset{\rightarrow v_{r-1}}{R}(y_r\wedge v_{r-1},y'_r) \nonumber\\
&\times \prod_{i=1}^{r-1}\overset{u_{i}\leftarrow}{P}(y_{i}\wedge v_{i-1},z'_{i})\prod_{i=1}^{r-1}\overset{ \rightarrow v_{i-1}}{P}(z'_{i}\vee x_{i},y'_{i}),
\end{align}
when $k=2r-1$ and
\begin{align}\label{Qleven}
Q_{k}((u,z,y),(x,z',y'))=\sum_{v\in \N^{r}}S_{k-1}&(u,(v,x)) \nonumber \\&\times \prod_{i=1}^{r} \overset{u_{i}\leftarrow}{P}(y_{i}\wedge v_{i-1},z'_{i})\prod_{i=1}^{r} \overset{ \rightarrow v_{i-1}}{P}(z'_{i}\vee x_{i},y'_{i}),
\end{align}
 when $k=2r$. In both cases $v_0=\infty$ and the sum runs over $v=(v_1,\dots,v_{r-1})\in \N^{r-1}$ (or $\mathbb{N}^r$) such that $v_i\in\{y'_{i+1},\dots,x_i\wedge z'_i\}$, for all $i.$ The notation can be depicted visually as 

\begin{center}
\begin{tabular}{ | c | c | c | c |}
  \hline 
  & time $n$ & time $n+1/2$ & time $n+1$ \\  \hline
  level $k$ & $y$ & $z'$ & $y'$ \\   \hline
  level $k-1$ & $u$ & $v$ & $x$ \\ \hline
\end{tabular}
\end{center}
Here is a description in words. For both $k=2r$ and $k=2r-1$, the $k$th level has $r$ particles, so after the $(k-1)$th  level evolves as $S_{k-1}$ (without dependence on what happens on the $k$th level), there is a $(r-1)$--fold double product corresponding to the left and right jumps of the $r-1$ particles away from the wall. For the particle closest to the wall, the evolution is as $R$ when $k$ is odd, and when $k$ is even the evolution fits into the previous $(r-1)$--fold double product. 
    
In order to show \eqref{PRInt}, there need to be explicit expressions and identities for these laws. The next lemma provides this.

\begin{lemma}\label{keyidentity}
For $(x,y,z)\in \N^3$ such that $0<z\le y$
\begin{align}\label{keyidentity1}
\sum_{u=0}^{z}  R(u,x)\overset{u\leftarrow}{P}(y,z)=  (1-q) q^{x\vee z+y-2z}.
\end{align}
For $(x,y,a)\in \N^3$ such that $a\le y$ and $y\le x$
\begin{align}\label{keyidentity2}
\sum_{u=a}^{y}q^u\overset{u\leftarrow}{P}(x,y)= q^{x-y}q^a.
\end{align}
For $(x,y,a)\in \N^3$ such that $y\le a$ and $x\le y$
\begin{align}\label{keyidentity3}
\sum_{v=y}^{a}q^{-v}\overset{\rightarrow v}{P}(x,y)= q^{y-x}q^{-a}.
\end{align}
For $y\in \N,y'\in \N^*$ such that $y'\le a$
\begin{align}\label{keyidentity4}
\sum_{v=y'}^{a}q^{v\vee y-2v}\overset{\rightarrow v}{R}(y\wedge v,y')= \frac{1}{1-q}q^{-a}R(y,y').
\end{align}
\end{lemma}  
\begin{proof} Note that \eqref{keyidentity2} and \eqref{keyidentity3} are precisely statements from Lemma 8.3 of \cite{D2}. Before showing the remaining identities are true, it is necessary to have formulas for these laws. The following two statements are from Lemma 8.2 of \cite{D1}: 

For $a,x,y\in \N$ such that $a\le y\le x$
\begin{align*}
\overset{a\leftarrow}{P}(x,y)&=\left\{
    \begin{array}{ll}
        (1-q)q^{x-y} & \mbox{if } a+1\le y \\
        q^{x-a} & \mbox{if } y=a.
    \end{array}
\right. 
\end{align*} 
For $b,x,y\in \N$ such that $b\ge y\ge x$
\begin{align*}
\overset{\rightarrow b}{P}(x,y)&=\left\{
    \begin{array}{ll}
        (1-q)q^{y-x} & \mbox{if } y\le b-1 \\
        q^{b-x} & \mbox{if } y=b.
    \end{array}
\right. 
\end{align*}

This next formula follows from direct computation. For $b,x,y\in \N$ such that $b\ge y, x$
\begin{align*}
\overset{\rightarrow b}{R}(x,y)&=\left\{
    \begin{array}{ll}
      \frac{1-q}{1+q}(q^{\vert y-x\vert }+q^{x+y+1}) & \mbox{if }  y\leq b-1,  \\
      \\
          \frac{1}{1+q}q^b(q^{-x }+q^{x+1}) & \mbox{if }  y=b, \\
          \\
    \end{array}
\right.
\end{align*}

So that
\begin{align*}
&\sum_{u=0}^z R(u,x)\stackrel{u \leftarrow}{P}(y,z)\\
&= \frac{1-q}{1+q}q^{y-z} \left( \sum_{u=0}^{z-1} (q^{u+x+1}+q^{\left| u-x\right|})(1-q) + (q^{z+x+1}+q^{\left| z-x\right|}) \right)\\
&=
\begin{cases}\frac{1-q}{1+q}q^{y-z} (q^{x+1}(1-q^z)+q^{x-z+1}-q^{x+1}+q^{z+x+1}+q^{x-z}), x\geq z\\
\frac{1-q}{1+q}q^{y-z} (q^{x+1}(1-q^z)-q^{x-1}+q+1-q^{z-x}+q^{z+x+1}+q^{ z-x}), x<z
\end{cases}
\end{align*}
which simplifies to $ (1-q) q^{x\vee z+y-2z} $.

Furthermore, 
\begin{align*}
&\sum_{v=y'}^{a}q^{v\vee y-2v}\overset{\rightarrow v}{R}(y\wedge v,y') = q^{y'\vee y-2y'} \ \overset{\rightarrow y'}{R}(y\wedge y',y') + \sum_{v=y'+1}^{a}q^{v\vee y-2v}\overset{\rightarrow v}{R}(y\wedge v,y') \\
&= q^{y' \vee y-2y'} \frac{1}{1+q} q^{y'}(q^{-y \wedge y'}+q^{y\wedge y'+1}) + \sum_{v=y'+1}^{a}q^{v\vee y-2v} \frac{1-q}{1+q} \left(q^{\left| y' - y\wedge v \right| }+q^{y\wedge v+y'+1}\right) \\
&= 
\begin{cases}
 \frac{1}{1+q} (q^{-y}+q^{y+1}) + \displaystyle\sum_{v=y'+1}^{a}q^{-v} \frac{1-q}{1+q} \left(q^{y' - y }+q^{y+y'+1}\right),  y \leq y' \leq a\\
 q^{y-2y'} \frac{1}{1+q} (1+q^{ 2y'+1}) + \displaystyle\sum_{v=y'+1}^{a}q^{y-2v} \frac{1-q}{1+q} \left(q^{v -y'   }+q^{ v+y'+1}\right), y' \leq a \leq y \\
  q^{y-2y'} \frac{1}{1+q} (1+q^{ 2y'+1}) + \displaystyle\sum_{v=y'+1}^{y}q^{ y-2v} \frac{1-q}{1+q} \left(q^{ v - y'  }+q^{ v+y'+1}\right) \\
 \quad \quad + \displaystyle\sum_{v=y+1}^{a}q^{-v} \frac{1-q}{1+q} \left(q^{ y-y'}+q^{y+y'+1}\right) , y' \leq y \leq a\\
\end{cases}
\end{align*}
Note that in each case, the summation over $v$ is of the form $\sum_v q^{-v}$, and in all cases simplifies to  $\frac{1}{1-q}q^{-a}R(y,y').$
\end{proof}

Now show that \eqref{PRInt} is true.
For $(z,y)\in \mathcal{W}^{(2)}_k$, $(x,z',y')\in \mathcal{W}_{k-1}\times \mathcal{W}_k^{(2)}$ such that $x\prec y'$,
\begin{align*}
L_{k}Q_{k}((z,y),(x,z',y'))&=\sum_{u\in \mathcal{W}_{k-1}}L_{k}((z,y),(u,z,y))Q_{k}((u,z,y),(x,z',y')).\end{align*}
Assume for now that $k=2r$. Then $L_kQ_k$ is equal to 
\begin{align*}
&\sum_{(u,v)\in \N^r\times \N^{r-1} } \frac{s_{k-1}(x)}{s_k(y)} (1-q)^{2r-2} R(u_r,x_r)q^{\sum_{i=1}^{r-1}(x_i+u_i-2v_i)}\nonumber\\&\quad \quad \quad\quad \quad \quad\quad \quad\quad \quad\quad \times P(z'_{1}\vee x_{1},y'_{1})\prod_{i=1}^{r} \overset{u_{i}\leftarrow}{P}(y_{i}\wedge v_{i-1},z'_{i})\prod_{i=2}^{r}  \overset{ \rightarrow v_{i-1}}{P}(z'_{i}\vee x_{i},y'_{i}).
\end{align*}
where the sum runs over $(u,v)\in \N^r\times \N^{r-1}$ such that $u_r\in\{0,\dots,z_r'\}$,  $v_i\in\{y'_{i+1},\dots,x_i\wedge z'_i\}$, $u_i\in \{v_i\vee y_{i+1},\dots,z'_i\}$, for $i\in\{1,\dots,r-1\}$.
Thus $L_kQ_k$ equals
\begin{align*}
&\sum_{v\in \N^{r-1}} \frac{s_{k-1}(x)}{s_k(y)} (1-q)^{2r-2}q^{\sum_{i=1}^{r-1}x_i}P(z'_{1}\vee x_{1},y'_{1}) \prod_{i=2}^{r}  q^{-2v_{i-1}}\overset{ \rightarrow v_{i-1}}{P}(z'_{i}\vee x_{i},y'_{i})\\&\quad \quad \quad\quad \quad  \times \sum_{ u\in \N^r }   ( (1-q)^{2r-2} R(u_r,x_r) \prod_{i=1}^{r} q^{u_i}\overset{u_{i}\leftarrow}{P}(y_{i}\wedge v_{i-1},z'_{i}) .
\end{align*}
Now evaluate the sum over $u$ and $v$. For each fixed $v$ the sum over $u$ is equal to
$$\sum_{u_r=0}^{z_r'} R(u_r,x_r)\overset{u_{r}\leftarrow}{P}(y_{r}\wedge v_{r-1},z'_{r})  \prod_{i=1}^{r-1}\sum_{u_i=v_i\vee y_{i+1}}^{z_i'}q^{u_i}\overset{u_{i}\leftarrow}{P}(y_{i}\wedge v_{i-1},z'_{i}).$$
Now, identities (\ref{keyidentity1}) and (\ref{keyidentity2}) of Lemma  \ref{keyidentity}  imply that the  sum over $u$ equals
$$
q^{x_r\vee z'_r+y_r\wedge v_{r-1}-2z_{r}'}(1-q)\prod_{i=1}^{r-1}q^{y_{i}\wedge v_{i-1}-z_{i}'+v_i\vee y_{i+1}},
$$
i.e.
$$
 q^{x_r\vee z'_r+y_r-2z'_r+\sum_{i=1}^{r-1}y_i+v_i-z_i'}(1-q).
$$
Thus
\begin{align*}
L_{2r}Q_{2r}((z,y),(x,z',y')) = & \frac{s_{k-1}(x)}{s_k(y)} (1-q)^{2r-1}  q^{x_r\vee z'_r+y_r-2z'_r+\sum_{i=1}^{r-1}y_i-z_i'}q^{\sum_{i=1}^{r-1}x_i} \\
&\quad \quad \quad   \times P(z'_{1}\vee x_{1},y'_{1})\prod_{i=2}^{r}  \sum_{v_{i-1}=y_{i}}^{x_{i-1}\vee z'_{i-1}} q^{-v_{i-1}}\overset{ \rightarrow v_{i-1}}{P}(z'_{i}\vee x_{i},y'_{i}).
\end{align*}
Identity (\ref{keyidentity3}) of Lemma \ref{keyidentity} gives that 
\begin{align*}
 \prod_{i=2}^{r}  \sum_{v_{i-1}=y_{i}}^{x_{i-1}\vee z'_{i-1}} q^{-v_{i-1}}\overset{ \rightarrow v_{i-1}}{P}(z'_{i}\vee x_{i},y'_{i})&=\prod_{i=2}^{r}q^{y'_i-z'_i\vee x_i-x_{i-1}\wedge z_{i-1}'}\\
&=q^{y_r'-z_r'\vee x_r-x_1\wedge z'_1}q^{\sum_{i=2}^{r-1}y'_i-x_i-z_i'},
\end{align*}
which implies 
 \begin{align*}
L_{2r}Q_{2r}((z,y),(x,z',y'))= \frac{s_{k-1}(x)}{s_k(y)} (1-q)^{2r} q^{\sum_{i=1}^{r}y_i+y'_i-2z_i'},
\end{align*}
which is quickly seen to be equal to $S_{2r}L_{2r}$. This finishes the proof when $k=2r$.

Similarly when $k=2r-1,$ 
 \begin{align*}
L_{2r}Q_{2r}((z,y),(x,z',y'))& =\sum_{u,v\in \N^{r-1}}\frac{s_{k-1}(x)}{s_{k}(y)}q^{\sum_{i=1}^{r-1}x_i-2v_i}\overset{\rightarrow v_{r-1}}{R}(y_r\wedge v_{r-1},y'_r) \\&   \quad \quad \quad \times \prod_{i=1}^{r-1}q^{u_i}\overset{u_{i}\leftarrow}{P}(y_{i}\wedge v_{i-1},z'_{i})\prod_{i=1}^{r-1}\overset{ \rightarrow v_{i-1}}{P}(z'_{i}\vee x_{i},y'_{i}),
\end{align*}
where the sum runs over $(u,v)\in \N^{r-1}\times \N^{r-1}$ such that   $v_i\in\{y'_{i+1},\dots,x_i\wedge z'_i\}$, $u_i\in \{v_i\vee y_{i+1},\dots,z'_i\}$, for $i\in\{1,\dots,r-1\}$. 
The rest of the calculations are similar, using identities (\ref{keyidentity2}), (\ref{keyidentity3}) and (\ref{keyidentity4}) of Lemma \ref{keyidentity}.  Therefore \eqref{PRInt} is true and the proof of Theorem \ref{MarkovProj} is done.

\end{proof}

\section{Asymptotics}\label{Asymp} The interacting particle system from \cite{C} is a determinantal point process. In general, a determinantal point processes on a discrete space $\mathcal{S}$ is uniquely characterized by an object called a correlation kernel, which is a function on $\mathcal{S}\times \mathcal{S}$. In \cite{C}, the asymptotics were calculated for Plancherel representations of $Sp(\infty)$. Here, we find the asymptotics for the pure alpha representations.

By Theorem 1.2 of \cite{C}, the correlation kernel at integer times $T$ is given by
\begin{align*}
K_T((s,k),(t,m)) &= \frac{2^{a_k+1/2}}{\pi} \frac{1}{2\pi i}\int_{-1}^1 \oint \frac{\phi(x)^T}{\phi(u)^T} J_{s,a_k}(x) J_{t,a_m}(u) \frac{(1-x)^{r_k}}{(1-u)^{r_m}} \frac{(1-x)^{a_k} (1+x)^{1/2}}{(x-u)}dudx \\
&+ 1_{k\geq m}\frac{2^{a_k+1/2}}{\pi} \int_{-1}^1 J_{s,a_k}(x) J_{t,a_m}(x) (1-x)^{r_k-r_m+a_k}(1+x)^{1/2}dx
\end{align*}
where $(s,k),(t,m) \in \mathbb{N}\times \mathbb{Z}_+$ and recall that $\phi(x)$ is the function from Proposition \ref{Same}. If $\phi(x)$ is replaced with $e^{(x-1)}$ and $T$ is allowed to take any nonnegative value, then $K$ becomes the correlation kernel corresponding to the Plancherel characters.

\subsection{Symmetric Pearcey}
Define the \textit{symmetric Pearcey kernel} $\mathcal{K}$ on $\R_+\times\R$ as follows (see Theorem 1.5 of \cite{C}). Let
\begin{multline*}\label{GaussianLikeKernel}
\mathcal{K}((\nu_1,\eta_1),(\nu_2,\eta_2))= \\
\frac{\sqrt{2}}{2\pi^2 i} \int_{-i\infty}^{i\infty}\int_0^{\infty}\exp\left( \frac{u^2-x^2}{8} + \frac{\eta_2 u - \eta_1 x}{2}\right) \sin(\nu_1 \sqrt{2x}) \sin(\nu_2 \sqrt{2u}) \frac{dxdu}{\sqrt{u}(u-x)}\\
+\frac{1_{\eta_2<\eta_1}}{\sqrt{\pi(\eta_1-\eta_2)}}\left(\exp\left(\frac{(\nu_1+\nu_2)^2}{\eta_2-\eta_1}\right)+\exp\left(\frac{(\nu_1-\nu_2)^2}{\eta_2-\eta_1}\right)\right).
\end{multline*}
By substituting $x\mapsto cx$ and $u\mapsto cu$,
\begin{multline*}
\mathcal{K}((\nu_1,\eta_1),(\nu_2,\eta_2))= \\
\frac{\sqrt{2}}{2\pi^2 i} \int_{-i\infty}^{i\infty}\int_0^{\infty}\exp\left( \frac{c^2u^2-c^2x^2}{8} + \frac{c\eta_2 u - c\eta_1 x}{2}\right) \sin(\nu_1 \sqrt{2cx}) \sin(\nu_2 \sqrt{2cu}) \frac{\sqrt{c} dxdu}{\sqrt{u}(u-x)}\\
+\frac{1_{\eta_2<\eta_1}}{\sqrt{\pi(\eta_1-\eta_2)}}\left(\exp\left(\frac{(\nu_1+\nu_2)^2}{\eta_2-\eta_1}\right)+\exp\left(\frac{(\nu_1-\nu_2)^2}{\eta_2-\eta_1}\right)\right).
\end{multline*}

\begin{theorem}
Let $c_{\alpha}$ be the constant $(1+\alpha)^{-2}(\alpha(2+\alpha))^{1/2}$, where $\alpha=2q/(1-q)$. Let $s_1$ and $s_2$ depend on $N$ in such a way that $s_i/N^{1/4}\rightarrow \nu_i c_{\alpha}^{1/2}>0$ as $N\rightarrow\infty$. Let $T$ and $r_1,r_2$ also depend on $N$ in such a way that $T/N\rightarrow 1$ and $(r_j- (1-(1+\alpha)^{-2})N)/\sqrt{N}\rightarrow c_{\alpha} \eta_j$. Then setting $k_j=2r_j+a_j-1/2$,
\[
(-2)^{r_2-r_1}(-1)^{s_1-s_2}2^{a_{n_2}-a_{n_1}} N^{1/4}{c_{\alpha}^{-1/2}}K((s_1,k_1),(s_2,k_2))\rightarrow \mathcal{K}((\nu_1,\eta_1),(\nu_2,\eta_2)).
\]
\end{theorem}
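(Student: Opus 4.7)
The plan is to adapt the steepest descent analysis of \cite{C} (used there to derive the Pearcey asymptotics from the Plancherel kernel) to the pure alpha $\phi$ of Proposition \ref{Same}. First I would substitute $\phi$ into the expression for $K_T$ and use the same $x=(z+z^{-1})/2$, $u=(w+w^{-1})/2$ change of variables that appeared in the proof of Proposition \ref{Same}. From the factorization used there, $\phi(x)^{-1}=(z-q)(1-qz)z^{-1}/(1-q)^2\cdot z$, so $\phi(x)^T/\phi(u)^T$ becomes a ratio of rational functions in $z,w$ whose logarithm is explicit. The Jacobi polynomial factors $J_{s,a_k}$ become Chebyshev-type sums $(z^{s+r_k}\pm z^{-(s+r_k)})$, and the $1$-dimensional integral over $[-1,1]$ turns into a contour integral on the unit circle with an extra $1/2$ from the two--to--one cover. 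After these substitutions the first, double--integral piece of $K_T$ takes the schematic form
\[
\oint\!\!\oint \exp\!\bigl(T[G(z)-G(w)]\bigr)\,\Bigl(\tfrac{(1-q)^2 z}{(z-q)(1-qz)}\Bigr)^{\!\!-r_{k_1}+r_{k_2}}\,\frac{\Phi_{s_1,k_1}(z)\,\Phi_{s_2,k_2}(w)}{z-w}\,dz\,dw ,
\]
with $G(z)=\log((z-q)(1-qz)/z)$ and $\Phi$ collecting the Chebyshev/Jacobi pieces.

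The second step is to locate the saddle. Under $T/N\to 1$ and $r_j=(1-(1+\alpha)^{-2})N+\sqrt{N}\,c_\alpha\eta_j+o(\sqrt{N})$, the natural exponent is $N\,H(z)$ where $H(z)=G(z)+(1-(1+\alpha)^{-2})\log\bigl((z-q)(1-qz)/z\bigr)$ (the precise combination of logs coming from $(1-x)^{r_k}$ after the substitution). A direct computation with $\alpha=2q/(1-q)$ shows $H'(z_0)=H''(z_0)=0$ at $z_0=-1$ with $H'''(z_0)\neq 0$; equivalently, $x_0=-1$ is the edge of the limit shape and the parameters line up so as to produce a triple critical point. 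The constant $c_\alpha=(1+\alpha)^{-2}\sqrt{\alpha(2+\alpha)}$ arises as the coefficient of the local expansion $H(z_0+\zeta)=H(z_0)+\tfrac{1}{24}(c_\alpha\zeta)^{4}+O(\zeta^5)$, after absorbing the subleading $\sqrt{N}\,c_\alpha\eta_j$ contribution from $r_j$; this is what forces the $N^{1/4}$ rescaling.

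The third step is the local analysis. Deform the $z$- and $w$-contours to pass through $z_0=-1$ along the appropriate conjugate pairs of steepest descent rays (so that the integrals localize on a scale $N^{-1/4}$ around $z_0$), and perform the change of variables $z=-1+\zeta c_\alpha^{-1}N^{-1/4}$, $w=-1+\omega c_\alpha^{-1}N^{-1/4}$. Using $s_j=\nu_j c_\alpha^{1/2}N^{1/4}+o(N^{1/4})$, the Chebyshev factors satisfy
\[
z^{\,s_j+r_{k_j}}\pm z^{-(s_j+r_{k_j})} \;\longrightarrow\; 2\,(-1)^{s_j+r_{k_j}}\cdot \bigl(\text{$\sin$ or $\cos$ of $\nu_j\sqrt{2\zeta}$}\bigr),
\]
and the remaining $r_j$ order--$\sqrt{N}$ contribution produces the linear-in-$\eta_j$ exponent. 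After collecting these pieces, the double integral converges to exactly the first, double integral of $\mathcal{K}$, with the rescaled variables $x=\zeta,u=\omega$. The explicit prefactor $(-2)^{r_2-r_1}(-1)^{s_1-s_2}2^{a_{k_2}-a_{k_1}}N^{1/4}c_\alpha^{-1/2}$ absorbs the Jacobian of $x\mapsto z$, the sign $(-1)^{s_j+r_{k_j}}$ from evaluating at $z_0=-1$, the factor $c_\alpha^{-1}N^{-1/4}$ per $dz$ against $dx$, and the residual $(1-x)^{a_k}(1+x)^{1/2}$ density.

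Finally, the $1_{k_1\geq k_2}$ single--integral term of $K_T$ is handled by classical Plancherel--Rotach-type asymptotics for $J_{s,\pm 1/2}$. In the regime $s\sim N^{1/4}$, $r\sim N$, the integrand $J_{s_1,a_{k_1}}(x)J_{s_2,a_{k_2}}(x)(1-x)^{r_{k_1}-r_{k_2}+a_{k_1}}(1+x)^{1/2}$ concentrates on an $N^{-1/2}$ window near $x=-1$; the leading sine-type asymptotics of the Jacobi polynomials then reduce the integral to a Gaussian in the local variable, which integrates to the two explicit exponentials in the indicator part of $\mathcal{K}$, with the indicator $1_{k_1\geq k_2}$ translating to $1_{\eta_2<\eta_1}$ after the rescaling. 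The main obstacle is the saddle-point step: verifying rigorously the triple degeneracy at $z_0=-1$, choosing contours that achieve steepest descent globally (not only near $z_0$), and controlling the tails and the subdominant $\sqrt{N}$ corrections uniformly enough that the $N^{1/4}$ rescaling with the explicit constant $c_\alpha$ emerges cleanly. Tracking the numerous sign and power prefactors to recover the stated normalization is tedious but mechanical once the saddle analysis is in hand.
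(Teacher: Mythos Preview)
Your overall plan --- steepest descent of the double integral in $K_T$ near $x=-1$, plus a separate estimate for the single--integral term --- is the same strategy the paper uses. But the paper carries it out directly in the $x$ variable, not in the Joukowski variable $z$, and this matters.

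In the $x$ coordinate the relevant exponent is
\[
A(x)=\log\phi_\alpha(x)+\bigl(1-(1+\alpha)^{-2}\bigr)\log(x-1),
\]
which the paper Taylor--expands as
\[
A(x)-A(-1)=-\tfrac{\alpha(2+\alpha)}{8(1+\alpha)^4}(x+1)^2+O\bigl((x+1)^3\bigr).
\]
So the critical point at $x=-1$ is \emph{simple} (nondegenerate quadratic), the local variable is $x'=N^{1/2}(x+1)$, and the quadratic exponent $\exp\bigl(\tfrac{u^2-x^2}{8}+\cdots\bigr)$ in $\mathcal{K}$ drops out immediately. The $N^{1/4}$ in the theorem comes \emph{not} from the integration scale but from the Jacobi polynomial asymptotic $N^{1/4}(-1)^s J_{s,a}(x)\sim 2^{-a}\sin(\nu\sqrt{2x'})/\sqrt{x'}$ when $s\sim\nu N^{1/4}$; the square root in $\sin(\nu\sqrt{2x'})$ is the classical edge behavior of $J_{s,a}$, not a signature of a higher--order saddle.

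Your route via $x=(z+z^{-1})/2$ is not wrong in principle, but it manufactures an artificial degeneracy: the Joukowski map has a branch point at $z=-1$, so $x+1\approx -\tfrac12(z+1)^2$ and the quadratic expansion of $A$ in $x$ becomes quartic in $z$. Thus in the $z$ variable one actually has $H'=H''=H'''=0$, $H^{(4)}\neq 0$, not the ``triple'' critical point $H'''\neq 0$ you state; and after the local change $z=-1+\zeta N^{-1/4}$ the Chebyshev factors give $\sin(\nu\cdot\zeta)$--type terms, not $\sin(\nu\sqrt{2\zeta})$. To recover the kernel as written you would still have to undo the Joukowski map locally, i.e.\ substitute $x'\sim \zeta^2$, at which point you are back to the paper's computation. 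The name ``Symmetric Pearcey'' is misleading here: $\mathcal{K}$ has a Gaussian (quadratic) exponent, not the quartic one of the standard Pearcey kernel, so there is no genuine higher--order coalescence to detect.

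In short: the steepest--descent idea is right, but drop the Joukowski substitution for the asymptotic step, expand $A(x)$ to second order at $x=-1$, rescale $x'=N^{1/2}(x+1)$, and invoke the $s\sim N^{1/4}$ asymptotic of $J_{s,a}$; then the quadratic exponent and the $\sin(\nu\sqrt{2x'})$ factors appear directly.
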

\begin{proof}
Since the proof is almost identical to the proof of Theorem 5.8 from \cite{BK} and Theorem 1.5 from \cite{C}, some of the details will be omitted. 
If $x' = N^{1/2} (x+1)$ and $u' = N^{1/2}(u+1)$, then by an (unnumbered) equation on page 41 of \cite{C},
$$
\frac{ (1-x)^{a_k}(1+x)^{1/2}}{x-u}dudx  \sim N^{-3/4} \frac{2^{a_k} \sqrt{x'}}{x' - u'}\cdot du'dx'.
$$
and if $s \sim N^{1/4}\nu$ then
$$
N^{1/4} (-1)^s J_{s,a}(x) \sim \frac{\sin [\nu \sqrt{2x'}] }{2^{a}\sqrt{x'}}
$$
To analyze the other terms in the integrand, define
\[
A(z)=\log\phi_{\alpha}(z)+(1-(1+\alpha)^{-2})\log(z-1),
\]
with asymptotic expansion
\[
A(z)-A(-1)=-\frac{\alpha(2+\alpha)}{8(1+\alpha)^4} (z+1)^2+O((z+1)^3).
\]
Thus the expression in $x$ becomes
\begin{multline*}
\exp\left( N(A(x)-A(-1)) + c_{\alpha}\eta\sqrt{N}(\log(1-x)-\log 2) \right) \\
\approx \exp\left( -\frac{c_{\alpha}^2}{8}(z+1)^2(x')^2 + c_{\alpha}\eta\sqrt{N}(\log(2-N^{-1/2}x')-\log 2)\right) \approx \exp\left( - \frac{c_{\alpha}^2}{8} (x')^2 - \frac{1}{2}c_{\alpha}\eta \cdot x' \right) 
\end{multline*}

\end{proof}

Note that after taking the determinant, the conjugating factors $(-2)^{r_2-r_1}(-1)^{s_1-s_2}2^{a_{n_2}-a_{n_1}}$ have no affect. 

\subsection{Discrete Jacobi}

For $-1< u < 1$ and $a_1,a_2=\pm\half$, define the \textit{discrete Jacobi kernel}
$L(r_1,a_1,s_1,r_2,a_2,s_2,b;u)$ as follows. If $2r_1 + a_1  \geq 2r_2 + a_2$, then
$$
L(r_1,a_1,s_1,r_2,a_2,s_2,b;u)
=\frac{2^{a_1+1/2}}{\pi}\int_u^1 J_{s_1,a_1}(x)   J_{s_2,a_2}(x)  (x-1)^{r_1 - r_2} (1-x)^{a_1}(1+x)^{b}dx.
$$
If $ 2r_1 + a_1  < 2r_2 + a_2$, then
$$
L(r_1,a_1,s_1,r_2,a_2,s_2,b;u)
=-  \frac{2^{a_1+1/2}}{\pi} \int_{-1}^u
J_{s_1,a_1}(x)   J_{s_2,a_2}(x)  (x-1)^{r_1 - r_2} (1-x)^{a_1}(1+x)^{b}dx.
$$
Note that $L$ only depends on $r_1,r_2$ through their difference $r_1-r_2$.
\begin{theorem}
Let $T$ depend on $N$ in such a way that $T/N\rightarrow t$. Let $r_1,\ldots,r_l$ depend on $N$ in such a way that $r_i/N\rightarrow l$ and their differences $r_i-r_j$ are fixed finite constants. Here, $t,l>0$. Fix $s_1,s_2,\ldots,s_l$ to be finite constants. Let 
\[
\theta=1+\frac{2l}{(l-t)(2\alpha+\alpha^2)}, \quad  \alpha=\frac{2q}{1-q}
\]
Then setting $k_j=2r_j+a_j-1/2$,
\begin{multline*}
\lim_{N\rightarrow\infty}\det[K_T((s_i,k_i),(s_j,k_j))]_{i,j=1}^l \\
=\begin{cases}
1,\ \ &l\geq (1-(1+\alpha)^{-2})t\\
\det[L(r_i,a_i,s_i,r_j,a_j,s_j  , 1/2 ;\theta)]_{i,j=1}^l,\ \ &l<(1-(1+\alpha)^{-2})t
\end{cases}
\end{multline*}
\end{theorem}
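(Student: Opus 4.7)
The plan is to perform a steepest-descent analysis of the double contour integral in $K_T$. Substituting $T=tN+o(N)$ and $r_k=lN+O(1)$, the factor
$$
\frac{\phi(x)^T}{\phi(u)^T}\frac{(1-x)^{r_k}}{(1-u)^{r_m}}
$$
equals $\exp\bigl(N(S(x)-S(u))\bigr)$ up to a bounded multiplicative correction, where $S(z):=t\log\phi(z)+l\log(1-z)$. Using the closed form $\phi(x)=2/[2+\alpha(2+\alpha)(1-x)]$ (which follows by simplifying the formula from Proposition \ref{Same}), a direct computation shows that $S'(z)$ has a unique zero at $z=\theta$, and that $-1<\theta<1$ precisely when $l<(1-(1+\alpha)^{-2})t$. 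The polynomial factors $J_{s_i,a_i}$ and the weight $(1-x)^{a_k}(1+x)^{1/2}$ are bounded in $x$ away from $\pm1$ since each $s_i$ is fixed, so they do not contribute exponentially.

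In the liquid regime $\theta\in(-1,1)$, I deform the $u$-contour into a steepest-descent contour through $\theta$, keeping $[-1,1]$ as the path for $x$ (which is itself a steepest-descent path through $\theta$). The deformation forces $u$ to cross a portion of $[-1,1]$, namely $[-1,\theta]$ or $[\theta,1]$ according to the sign of $2r_1+a_1-(2r_2+a_2)$, producing a residue at $u=x$ equal up to orientation sign to
$$
\frac{2^{a_k+1/2}}{\pi}J_{s_1,a_1}(x)J_{s_2,a_2}(x)(1-x)^{r_1-r_2+a_1}(1+x)^{1/2}.
$$
Combined with the $1_{k\geq m}$ single integral over all of $[-1,1]$, the residue leaves exactly one of $\int_{-1}^\theta$ or $\int_\theta^1$. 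Using $(1-x)^{r_1-r_2}=(-1)^{r_1-r_2}(x-1)^{r_1-r_2}$, this matches the discrete Jacobi kernel $L(r_1,a_1,s_1,r_2,a_2,s_2,1/2;\theta)$ up to a sign $(-1)^{r_i-r_j}$ in the $(i,j)$ entry, which cancels in the $l\times l$ determinant just as in the Pearcey case. The remaining double integral on the deformed contour is $o(1)$ by standard steepest-descent estimates.

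In the frozen regime $l\geq(1-(1+\alpha)^{-2})t$ the saddle $\theta$ lies outside $[-1,1]$, so a suitable contour choice keeps $\mathrm{Re}(S(x)-S(u))$ uniformly negative along the $x$-integral, making the double integral exponentially small. The single-integral term, evaluated at fixed $(s_i,k_i)$ with bounded $s_i$, tends to the identity matrix (reflecting the densely packed configuration), and the determinant tends to $1$.

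The main obstacle is the explicit construction of the steepest-descent $u$-contour: it must pass through $\theta$, stay outside $[-1,1]$ except there, avoid the branch point of $\log(1-z)$ at $z=1$ and the pole of $\phi$, and satisfy $\mathrm{Re}(S(u)-S(\theta))>0$ elsewhere. The asymmetric endpoint weights $(1-x)^{a_k}$ at $x=1$ and $(1+x)^{1/2}$ at $x=-1$ require separate local analysis, but once $S''(\theta)$ is computed the descent geometry is fixed and the remainder of the argument is parallel to Theorem 5.8 of \cite{BK} and Theorem 1.5 of \cite{C}.
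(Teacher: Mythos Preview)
Your outline is correct and is precisely the steepest-descent argument that the paper does not spell out: the paper's own proof consists entirely of the sentence ``The proof of Theorem~4.1 of \cite{K} carries over here; the only difference is the parameters in the Jacobi polynomials, but these have no effect in the asymptotics.''  So you have reconstructed, in more detail than the paper gives, exactly the intended approach.  Two minor remarks: in the frozen regime the limiting matrix is not literally the identity but is (after ordering the $k_j$) triangular with $1$'s on the diagonal, which is what makes the determinant equal to $1$; and the direction of the $u$-contour deformation (and hence which of $[-1,\theta]$ or $[\theta,1]$ appears as the residue interval) is dictated by the location of the original $u$-contour relative to $[-1,1]$ in the kernel of \cite{C}, not directly by the sign of $2r_1+a_1-(2r_2+a_2)$, though the final bookkeeping with the $1_{k\ge m}$ term comes out the same.
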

\begin{proof}
The proof of Theorem 4.1 of \cite{K} carries over here. The only difference is the parameters in the Jacobi polynomials, but these have no effect in the asymptotics.
\end{proof}

\bibliographystyle{plain}

\begin{center}
\begin{figure}
\caption{The top figure shows left jumps and the bottom figure shows right jumps. A yellow arrow means that the particle has been pushed by a particle below it. A green arrow means that the particle has jumped by itself. A red line means that the particle has been blocked by a particle below. 
\quad
In the table, keep in mind that $\xi^k_{(k+1)/2}(n+1/2)$ actually correspond to left jumps, but occur at the same time as the right jumps.
}
\label{Jumping}
\includegraphics[height=2in]{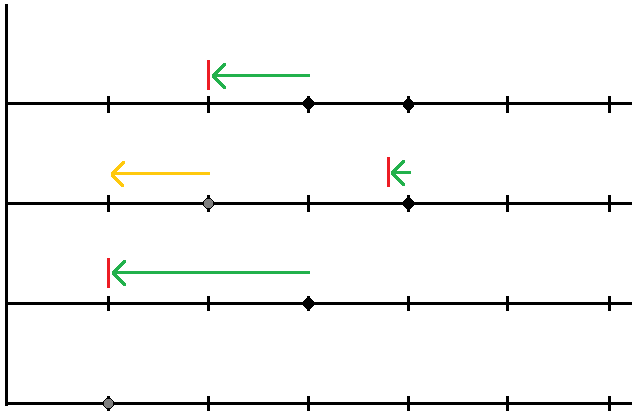}

    \renewcommand\arraystretch{1.33}
    \begin{tabular}{ | l | l | l | l | l | }
    \hline
    $\boldsymbol{\tilde{X}}(n)$ & Left Jumps & $\boldsymbol{\tilde{X}}(n+\half)$ & Right jumps  & $\boldsymbol{\tilde{X}}(n+1)$ \\ \hline
    $\tilde{X}^1_1(n)=1$ &                                     & $\tilde{X}^1_1(n+\half)=1$ & $\xi^1_1(n+\half)=1$ & $\tilde{X}^1_1(n+1)=3$\\  
			    &					    &					 & $\xi^1_1(n+1)=3$ 	 & 	 			\\ \hline
    $\tilde{X}^2_1(n)=3$ & $\xi^2_1(n+\half)=3$ & $\tilde{X}^2_1(n+\half)=1$ & $\xi^2_1(n+1)=1$ 	 & $\tilde{X}^2_1(n+1)=4$ \\ \hline
    $\tilde{X}^3_2(n)=2$ &                                     & $\tilde{X}^3_2(n+\half)=1$ & $\xi^3_2(n+\half)=2$ & $\tilde{X}^3_2(n+1)=0$\\      
			    &					    &					 & $\xi^3_2(n+1)=0$ 	 & 				  \\ \hline
    $\tilde{X}^3_1(n)=4$ & $\xi^3_1(n+\half)=1$ & $\tilde{X}^3_1(n+\half)=4$ & $\xi^3_1(n+1)=0$ 	 & $\tilde{X}^3_1(n+1)=5$  \\    \hline
    $\tilde{X}^4_2(n)=3$ & $\xi^4_2(n+\half)=2$ & $\tilde{X}^4_2(n+\half)=2$ & $\xi^4_2(n+1)=2$ 	 & $\tilde{X}^4_2(n+1)=3$  \\    \hline
    $\tilde{X}^4_1(n)=4$ & $\xi^4_1(n+\half)=0$ & $\tilde{X}^1_1(n+\half)=4$ & $\xi^4_1(n+1)=1$ 	 & $\tilde{X}^4_1(n+1)=6$  \\    \hline
    \end{tabular}
    
\includegraphics[height=2in]{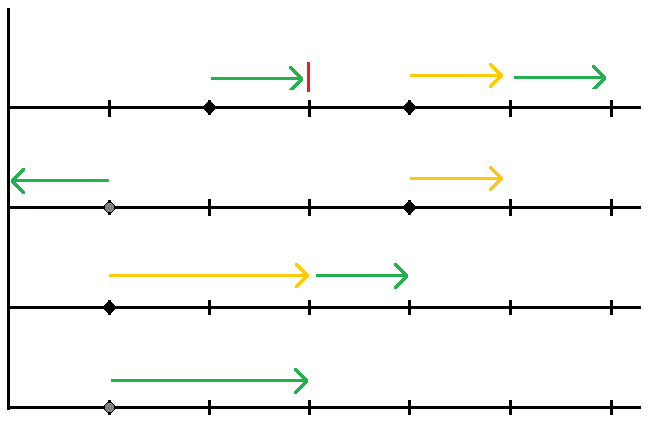}
\end{figure}
\end{center}

\end{document}